\title{SHRINAKGE FUNCTION AND ITS APPLICATIONS IN MATRIX APPROXIMATION}
\author{Toby Boas\thanks{Department of Mathematics, University of Florida, Gainesville, FL 32611-8105 ({tboas@ufl.edu}).}
        \and Aritra Dutta \thanks{Department of Mathematics, University of Central Florida, 4000 Central Florida Blvd, Orlando, Florida 32816({d.aritra2010@knights.ucf.edu}).}
        \and Xin Li \thanks{Department of Mathematics, University of Central Florida, 4000 Central Florida Blvd, Orlando, Florida 32816 ({xli@ucf.edu}).}
        \and Kathryn P. Mercier\thanks{Department of Biology, University of Central Florida, 4000 Central Florida Blvd, Orlando, Florida 
        	32816({katie.mercier@knights.ucf.edu}).}
        \and Eric Niderman\thanks{Department of Mathematics, University of Central Florida, 4000 Central Florida Blvd, Orlando, Florida 
        32816 ({eaniederman@knights.ucf.edu}).}
        }
\begin{document}

\maketitle

\begin{abstract}The shrinkage function is widely used in matrix low-rank approximation, compressive sensing, and statistical estimation.~In this article, an elementary derivation of the shrinkage function is given.~In addition, applications of the shrinkage function are demonstrated in solving several well-known problems, together with a new result in matrix approximation. 
\end{abstract}

\begin{keywords}
	Shrinkage function, singular value decomposition, low-rank approximation, sparse approximation. 
\end{keywords}

\begin{AMS}
	65F15, 65F30, 65F35, 65F50, 65K10
\end{AMS}

\pagestyle{myheadings}
\thispagestyle{plain}
\markboth{BOAS, DUTTA, LI, MERCIER AND NIDERMAN}{Shrinkage function and its applications}

\section{Introduction} Historically some important mathematical functions have been introduced for convenience. For example, the Heaviside step function $H(\cdot)$, a piecewise constant function given by:
$$
H(x)=\left\{\begin{array}{ll}
1,&x> 0\\
\frac{1}{2},&x=0\\
0,&x<0
\end{array},
\right.
$$
and the Dirac delta function $\delta(\cdot)$~(more precisely, a distribution~(see, e.g.,~\cite{dirac})), a generalized function whose discrete analog is referred to as the Kronecker delta function:
$$\delta_{ij}=\left\{\begin{array}{ll}
1,&i=j\\
0,&i\neq j
\end{array}.
\right.
$$

This article concerns a newcomer, the shrinkage function $S_{\lambda}(\cdot)$, first introduced by Donoho and Johnstone in their landmark paper~(\cite{soft-threshold:donoho-johnstone}, see also \cite{soft-threshold2}) on function estimation using wavelets in the early 1990's. Recently, the shrinkage function has been heavily used in the solutions of several optimization and approximation problems of matrices (see, e.g.,~\cite{svt:cai-candes-shen,lin-chen-ma,tao-yuan,yuan-yang}). We give an elementary treatment that is accessible to a vast group of researchers, as it only requires basic knowledge in calculus and linear algebra and show how naturally the shrinkage function can be used in solving more advanced problems.

\section{A calculus problem} We start with a simple calculus problem. Let $\lambda>0$ and $a\in {\mathbb R}$ be given. Consider
the following problem:
\begin{equation}\label{calcp}
\min_{x\in {\mathbb R}}\left[\lambda|x|+\frac{1}{2}(x-a)^2\right].
\end{equation}
We adopt the notation $a=\displaystyle{\arg\min_{x\in A}f(x)}$ to mean that $a\in A$ is a solution of the minimization problem $\displaystyle{\min_{x\in A}f(x)}$ and define:
\begin{equation}\label{P}
S_{\lambda}(a):=\displaystyle{\arg\min_{x\in {\mathbb R}}\left[\lambda|x|+\frac{1}{2}(x-a)^2\right]}.
\end{equation}

\begin{theorem}\label{theorem 1}
{ Let $\lambda>0$ be fixed. For
each $a \in {\mathbb R}$, there is one and only one solution $S_{\lambda}(a)$, to
the minimization problem~(\ref{P}). Furthermore,
$$
S_{\lambda}(a)=\left\{\begin{array}{ll}
a-\lambda,&a>\lambda\\
0,&|a|\leq \lambda\\
a+\lambda,&a<-\lambda
\end{array}.\right.$$
}
\end{theorem}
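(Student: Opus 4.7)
The plan is to prove existence and uniqueness first by general convex-analytic reasoning, then to identify the minimizer by splitting into cases determined by the sign of $x$, where the absolute value becomes smooth.

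First I would observe that $f(x):=\lambda|x|+\tfrac12(x-a)^2$ is the sum of a convex function ($\lambda|x|$) and a strictly convex function ($\tfrac12(x-a)^2$), hence strictly convex on $\mathbb{R}$. It is also continuous and coercive, since $f(x)\to\infty$ as $|x|\to\infty$. Standard facts from elementary calculus (continuity on $\mathbb{R}$, growth at infinity) give that a minimizer exists, and strict convexity forces it to be unique. This disposes of the existence and uniqueness part without touching the formula.

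To derive the explicit formula, I would restrict $f$ to the two open half-lines, where the absolute value is smooth. On $(0,\infty)$, $f(x)=\lambda x+\tfrac12(x-a)^2$ is differentiable with $f'(x)=x-(a-\lambda)$; its unique critical point is $x=a-\lambda$, which lies in $(0,\infty)$ exactly when $a>\lambda$. On $(-\infty,0)$, $f(x)=-\lambda x+\tfrac12(x-a)^2$ with $f'(x)=x-(a+\lambda)$; the critical point $x=a+\lambda$ lies in $(-\infty,0)$ exactly when $a<-\lambda$. When $a>\lambda$, I would then note that on $(-\infty,0)$ the derivative satisfies $f'(x)=x-(a+\lambda)<0$ throughout (since $a+\lambda>0>x$), so $f$ is strictly decreasing on $(-\infty,0)$ and the global minimizer must lie in $[0,\infty)$; by continuity and the analysis on $(0,\infty)$ the minimizer is $a-\lambda$. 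The case $a<-\lambda$ is symmetric.

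The slightly subtle case is $|a|\le\lambda$, where neither one-sided critical point lands in its half-line. Here I would show directly that $x=0$ is the minimizer. On $(0,\infty)$, $f'(x)=x-(a-\lambda)\ge x+\lambda-a\ge 0$ whenever $x\ge 0$ and $a\le\lambda$, so $f$ is non-decreasing on $[0,\infty)$. On $(-\infty,0)$, $f'(x)=x-(a+\lambda)\le 0$ whenever $x\le 0$ and $a\ge -\lambda$, so $f$ is non-increasing on $(-\infty,0]$. Hence the global minimum is attained at $x=0$, matching the formula $S_\lambda(a)=0$.

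I expect the main obstacle to be the middle case $|a|\le\lambda$: the absolute value is non-differentiable at $0$, so one cannot simply set a derivative to zero, and a reader expecting a one-shot calculation has to be convinced by the monotonicity argument on each side that $0$ really is the minimizer. Everything else is a routine quadratic minimization.
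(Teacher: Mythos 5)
Your proposal is correct, and while the computational core (the one-sided derivatives $f'(x)=x-(a-\lambda)$ on $(0,\infty)$ and $f'(x)=x-(a+\lambda)$ on $(-\infty,0)$) coincides with the paper's, the logical structure differs in two ways worth noting. First, the paper never mentions convexity: it gets existence from coercivity and continuity, and gets uniqueness only at the end, from the observation that the three conditions $a>\lambda$, $a<-\lambda$, $|a|\le\lambda$ are mutually exclusive and exhaustive, so the three candidate formulas cannot conflict. You instead settle uniqueness up front via strict convexity of $\lambda|x|+\tfrac12(x-a)^2$, which is cleaner and more robust but slightly less elementary in the spirit of the paper's "calculus only" framing. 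Second, the direction of the case analysis is reversed: the paper splits on the sign of the (assumed) minimizer $x^*$ and derives \emph{necessary} conditions on $a$ in each case, whereas you split on the value of $a$ and directly verify by monotonicity that the claimed point is the minimizer --- a \emph{sufficiency} argument. Your treatment of the case $|a|\le\lambda$ is actually a bit more careful than the paper's: the paper asserts that a minimum at $0$ forces $f'>0$ on all of $(0,\infty)$ and $f'<0$ on all of $(-\infty,0)$, which is not a valid necessary condition for a general function (only the one-sided derivatives at $0$ need to have the right signs), though it happens to be harmless here because $f'$ is monotone on each half-line; your argument runs in the safe direction and avoids this issue. One trivial slip: in the middle case you write $f'(x)=x-(a-\lambda)\ge x+\lambda-a$, which is an equality, not an inequality --- the conclusion is unaffected.
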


\noindent {\bf Remark.} The function $S_{\lambda}(\cdot )$ defined
above is called the shrinkage function (also referred to as
soft shrinkage or soft threshold, \cite{soft-threshold:donoho-johnstone,soft-threshold2}). One may imagine that $S_{\lambda}(a)$ ``shrinks'' $a$ to zero when~$|a|\leq \lambda.$
A plot of $S_{\lambda}(\cdot)$  for $\lambda = 1$ is
given in Fig.~\ref{fig1}.
\begin{figure}[H]
	\centering
		\includegraphics[width=0.5\textwidth]{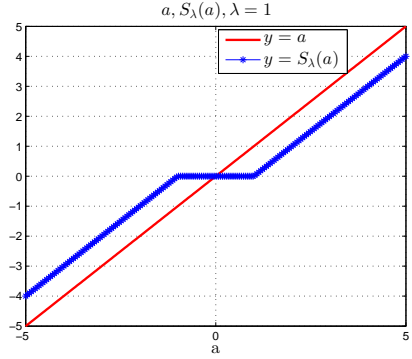}
		\caption{A plot of $S_{\lambda}$ for $\lambda=1$.}
		\label{fig1}
		\end{figure}

\begin{proof} Let $f(x)=\lambda|x|+\frac{1}{2}(x-a)^2$. Note that $f(x)\to\infty$ when $|x|\to\infty$ and $f$ is
continuous on ${\mathbb R}$ and differentiable everywhere except a single point $x=0$.
So, $f$ achieves its minimum value on ${\mathbb R}$ at one of its critical points.~A plot of $f$ for different values of $a$ and $\lambda=1$ is given in Fig.~\ref{fig2}.~Let $x^*=\displaystyle{\arg\min_{x\in {\mathbb R}}f(x)}$.
\begin{figure}[httb!]
	\centering
	\includegraphics[width=0.5\textwidth]{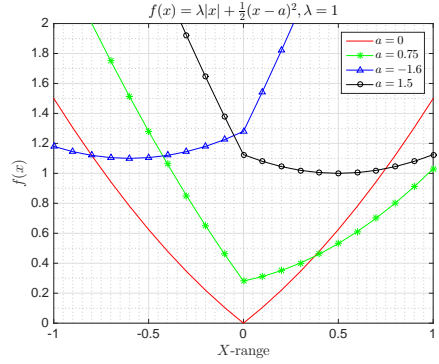}
	\caption{Plots of $f(x)$ for different values of $a$ with $\lambda=1$.}
	\label{fig2}
\end{figure}

We consider three cases.

Case 1: $x^*>0$. Since $f$ is differentiable at $x=x^*$ and achieves its minimum,
we must have $f'(x^*)=0$. Note that, for $x>0$, we have $$f'(x)=\frac{d}{dx}[\lambda x+\frac{1}{2}(x-a)^2]=\lambda+(x-a).$$
So, $$\lambda +(x^*-a)=0,$$
which implies $$x^*=a-\lambda.$$
To be consistent with $x^*>0$, it is necessary that $a-\lambda>0$ or, equivalently, $a>\lambda$.

Case 2: $x^*<0$. By proceeding similarly as in Case 1 above, we can arrive at
$$
x^*=a+\lambda ~~{\rm with }~~a<-\lambda.$$

Case 3: $x^*=0$. Note that $f(x)$ is no longer differentiable at $x^*=0$~(so we could not use the condition $f'(x^*)=0$ as before). But since $f$ has a minimum at $x^*=0$ and since $f$ is differentiable on each side of $x^*=0$, it is necessary that
$$f'(x)>0\;\;\text{for}\;\;{x>0}\;\;\text{and}\;\;f'(x)<0\;\text{for}\;\;x<0.$$
So,
$$ 
\lambda+x-a>0\;\;\text{for}\;x>0\;\;\text{and}\;-\lambda+x-a<0\;\;\text{for}\;x<0.
$$
Thus, 
$$ 
\lambda-a>0\;\;\text{and}\;-\lambda-a<0,
$$
or, equivalently,
$$
|a|\le\lambda.
$$
To summarize, we have
$$
x^*=\left\{
\begin{array}{ll}
a-\lambda &{\rm with}~a>\lambda,\\
a+\lambda &{\rm with}~a<-\lambda,\\
0 &{\rm with}~|a|\leq \lambda.
\end{array}
\right.$$

Since one and only one of the three cases (1) $a>\lambda$, (2)
$a<-\lambda$, and (3) $|a|\leq \lambda$ holds, we obtain the
uniqueness. With the uniqueness, it is straightforward
to verify that each of
the three cases would imply the corresponding formula for $x^*$.
\end{proof}

\section{A sparse recovery problem} Recently, research in
compressive sensing leads to the recognition of the fact that for many optimization problems, the $\ell_1$-norm
of a vector is a good substitute for the count of the number of
non-zero entries of the vector in many minimization problems. In this section, we solve some simple minimization problems using the count of non-zero entries or the $\ell_1$-norm.
Given a vector
${\mathbf v}\in {\mathbb R}^n$, we want to solve
\begin{equation}\label{SP}
\min_{{\mathbf u} \in \mathbb{R}^n}[\|{\mathbf u}\|_{\ell_0}+\frac{\beta}{2}\|{\mathbf u}-{\mathbf v}\|_{\ell_2}^2],
\end{equation}
where $\|\cdot\|_{\ell_0}$ denotes the number of non-zero entries of ${\mathbf u}$, such that, $\|{\mathbf u}\|_{\ell_0}=\#\{i|u_i\neq 0\},$ the cardinality of ${\mathbf u}$,~$\|\cdot\|_{\ell_2}$ denotes the
Euclidean norm in ${\mathbb R}^n$, and $\beta>0$ is a given balancing
parameter.~We can solve problem~(\ref{SP}) component-wise~(in each $u_i$) as follows.~Notice that, given $\mathbf{u} \in \mathbb{R}^n$, each entry $u_i$ of $\mathbf{u}$ contributes 1 to $\|{\mathbf u}\|_{\ell_0}$ if $u_i$ is non-zero, and contributes 0 if $u_i$ is zero.~Since we are minimizing $g({\mathbf u}):=\|{\mathbf u}\|_{\ell_0}+\frac{\beta}{2}\|{\mathbf u}-{\mathbf v}\|_{\ell_2}^2$, if $u_i$ is zero then the contribution to $g({\mathbf u})$ depending on this $u_i$ is $\frac{\beta}{2}v_i^2$; otherwise, if $u_i$ is non-zero, then we should minimize $\frac{\beta}{2}(u_i - v_i)^2$ for $u_i \in \mathbb{R}\setminus \{0\}$, which forces that $u_i = v_i$ and contributes 1 to $g({\mathbf u})$ as the minimum value.  Therefore, the solution $\mathbf{u}$ to problem (\ref{SP}) is given component-wise by
$$
u_i =
\begin{cases}
0, & \text{if }\frac{\beta}{2}(v_i)^2 \le 1 \\
v_i, & \text{ otherwise.}
\end{cases}
$$
Next, we replace $\|{\mathbf u}\|_{\ell_0}$ by $\|{\mathbf u}\|_{\ell_1}$ in (\ref{SP}) and solve:
\begin{equation}\label{P2}
\min_{{\mathbf u}\in \mathbb{R}^n}[\|{\mathbf u}\|_{\ell_1}+\frac{\beta}{2}\|{\mathbf u}-{\mathbf v}\|_{\ell_2}^2],
\end{equation}
where $\|\cdot \|_{\ell_1}$ denotes the $\ell_1$ norm in ${\mathbb
R}^n$.

Using Theorem~\ref{theorem 1}, we can solve (\ref{P2}) component-wise as follows.
\vspace{0.05in}
\begin{theorem}\label{theorem 2} \cite{yin-hale-zhang} { Let $\beta > 0$ and $\mathbf{v} \in \mathbb{R}^n$ be given and let $$ {\mathbf
u}^*=\arg\min_{{\mathbf u}\in \mathbb{R}^n}[\|{\mathbf
u}\|_{\ell_1}+\frac{\beta}{2}\|{\mathbf u}-{\mathbf v}\|_{\ell_2}^2].
$$
Then
$${\mathbf u}^*=S_{1/\beta}({\mathbf v}),$$
where, $S_{1/\beta}({\mathbf v})$ denotes the vector
whose entries are obtained by applying the shrinkage function
$S_{1/\beta}(\cdot)$ to the corresponding entries of ${\mathbf
v}$.}
\end{theorem}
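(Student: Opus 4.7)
My plan is to exploit the separability of the objective and reduce the problem to $n$ independent copies of the scalar problem already solved in Theorem~\ref{theorem 1}. The key observation is that both $\|\mathbf{u}\|_{\ell_1} = \sum_{i=1}^{n} |u_i|$ and $\|\mathbf{u}-\mathbf{v}\|_{\ell_2}^2 = \sum_{i=1}^{n} (u_i - v_i)^2$ decompose additively into terms that each depend on only one coordinate of $\mathbf{u}$. Hence the full objective rewrites as $\sum_{i=1}^{n} \bigl[|u_i| + \tfrac{\beta}{2}(u_i-v_i)^2\bigr]$, and minimizing a sum of univariate functions over $\mathbb{R}^n = \mathbb{R}\times\cdots\times\mathbb{R}$ reduces to minimizing each summand independently.

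Next I would rescale each scalar subproblem to match the form in~(\ref{P}). The $i$-th subproblem reads $\min_{u_i\in\mathbb{R}} \bigl[|u_i| + \tfrac{\beta}{2}(u_i-v_i)^2\bigr]$; factoring out the positive constant $\beta$ (which does not alter the location of the minimizer) yields $\beta \cdot \min_{u_i\in\mathbb{R}} \bigl[\tfrac{1}{\beta}|u_i| + \tfrac{1}{2}(u_i-v_i)^2\bigr]$. With the identification $\lambda := 1/\beta$ and $a := v_i$, this is exactly problem~(\ref{P}), so Theorem~\ref{theorem 1} supplies a unique minimizer $u_i^* = S_{1/\beta}(v_i)$. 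Assembling these coordinate-wise minimizers into a vector gives $\mathbf{u}^* = S_{1/\beta}(\mathbf{v})$, which is the desired formula.

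The argument is essentially a bookkeeping exercise once Theorem~\ref{theorem 1} is available, so there is no substantial obstacle. The only point that might merit an explicit sentence is the justification that componentwise minimization genuinely produces the global minimum over $\mathbb{R}^n$: since the objective is a sum of $n$ functions depending on disjoint single coordinates, the infimum of the sum equals the sum of the infima and is attained simultaneously by taking each scalar minimizer. Uniqueness of $\mathbf{u}^*$ then follows coordinate-by-coordinate from the uniqueness clause of Theorem~\ref{theorem 1}.
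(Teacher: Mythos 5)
Your proposal is correct and follows essentially the same route as the paper's proof: separate the objective coordinate-wise, factor out $\beta$ to match the form of problem~(\ref{P}), and invoke Theorem~\ref{theorem 1} on each scalar subproblem. Your added remark justifying that coordinate-wise minimization yields the global minimum (and the uniqueness observation) is a nice touch of rigor that the paper leaves implicit.
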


\begin{proof}  If $u_i$ and $v_i$ denote the $i$th entry of the vectors $\mathbf{u}$ and $\mathbf{v}$, respectively, $i = 1,2,\dots,n$, then we have,
\begin{eqnarray*}
{\mathbf
u}^*&=&\arg\min_{{\mathbf u}\in \mathbb{R}^n}[\|{\mathbf
u}\|_{\ell_1}+\frac{\beta}{2}\|{\mathbf u}-{\mathbf v}\|_{\ell_2}^2] \\
%&=& \arg\min_{\mathbf{u} \in \mathbb{R}^{n}} \sum_{i=1}^n |u_i| + \frac{\beta}{2}\sum_{i=1}^n(u_i -v_i)^2 \\
%&=& \arg\min_{\mathbf{u} \in \mathbb{R}^{n}} \sum_{i=1}^n\left( |u_i| + \frac{\beta}{2}(u_i -v_i)^2\right) \\
&=& \arg\min_{\mathbf{u} \in \mathbb{R}^{n}} \left[\sum_{i=1}^n\left( \frac{1}{\beta}|u_i| + \frac{1}{2}(u_i -v_i)^2\right)\right].
\end{eqnarray*}
Since the $i$-th term in the summation depends  only on $u_i$, the vector $\mathbf{u}^*$ must have components $u_i^*$ satisfying
$$ u_i^* = \arg \min_{u_i^* \in \mathbb{R}}[\frac{1}{\beta}|u_i| + \frac{1}{2}(u_i - v_i)^2],$$
for $i = 1,2,\dots,n$.  But by Theorem~\ref{theorem 1}, the solution to each of these problems is given precisely by $S_{1/\beta}(v_i)$.  This yields the result.
\end{proof}
\\

\noindent
{\bf Remark.} The previous proof still works if we replace the vectors by
matrices and use the extension of the $\ell_1$ and $\ell_2$-norms to matrices {\it by treating them as vectors}.  Thus by using the same argument we can easily show the following matrix version of the previous theorem.
\vspace{0.05in}
\begin{theorem}\label{theorem 3}~\cite{yin-hale-zhang} 
Let $\beta > 0$ and $\mathbf{V} \in \mathbb{R}^{m\times n}$ be given.  Then
$$S_{1/\beta}(\mathbf{V}) = \arg \min_{\mathbf{U} \in \mathbb{R}^{m\times n}}[\|\mathbf{U}\|_{\ell_1} + \frac{\beta}{2}\|\mathbf{U} - \mathbf{V}\|^2_{\ell_2}],$$
where $S_{1/\beta}(\mathbf{V})$ is defined component-wise.
\end{theorem}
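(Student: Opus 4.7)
The plan is essentially a direct adaptation of the proof of Theorem~\ref{theorem 2}, relying on the remark that the matrix norms $\|\cdot\|_{\ell_1}$ and $\|\cdot\|_{\ell_2}$ are defined by treating a matrix as a vector of its entries. Concretely, if $\mathbf{U}=(u_{ij})$ and $\mathbf{V}=(v_{ij})$, then $\|\mathbf{U}\|_{\ell_1}=\sum_{i,j}|u_{ij}|$ and $\|\mathbf{U}-\mathbf{V}\|_{\ell_2}^2=\sum_{i,j}(u_{ij}-v_{ij})^2$, so the whole objective splits into a double sum over the entries of $\mathbf{U}$.

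First I would rewrite the objective in its separable form,
\[
\|\mathbf{U}\|_{\ell_1} + \frac{\beta}{2}\|\mathbf{U}-\mathbf{V}\|_{\ell_2}^2
= \beta\sum_{i=1}^{m}\sum_{j=1}^{n}\left[\frac{1}{\beta}|u_{ij}| + \frac{1}{2}(u_{ij}-v_{ij})^2\right].
\]
Since $\beta>0$ is a positive constant and each bracketed term depends only on the single scalar $u_{ij}$, minimizing the expression over $\mathbf{U}\in\mathbb{R}^{m\times n}$ is equivalent to minimizing each of the $mn$ inner terms independently over $u_{ij}\in\mathbb{R}$.

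Next I would apply Theorem~\ref{theorem 1} to each scalar subproblem with the parameter choices $\lambda=1/\beta$ and $a=v_{ij}$. The theorem delivers a unique minimizer for each subproblem, namely $u_{ij}^{*}=S_{1/\beta}(v_{ij})$. Assembling these entry-wise minimizers produces the matrix whose $(i,j)$ entry is $S_{1/\beta}(v_{ij})$, which is precisely the definition of $S_{1/\beta}(\mathbf{V})$ given in the theorem statement.

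There is no real obstacle here, since the proof is essentially the content of the preceding remark. The only point requiring a moment of care is to verify that the separability genuinely reduces the matrix minimization to $mn$ independent scalar problems, i.e., that a configuration minimizing each term individually necessarily minimizes the sum; this is immediate because the variables $u_{ij}$ do not interact across indices, and it mirrors exactly the step in Theorem~\ref{theorem 2} where the single index $i$ is now replaced by the pair $(i,j)$.
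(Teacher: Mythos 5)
Your proposal is correct and is exactly the argument the paper intends: the paper's ``proof'' of Theorem~\ref{theorem 3} is just the remark that the vector argument of Theorem~\ref{theorem 2} carries over verbatim by treating the matrix as a vector of its entries, which is precisely the separable decomposition and entrywise application of Theorem~\ref{theorem 1} that you spell out.
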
\\
Theorem~\ref{theorem 3} solves the problem of approximating a given
matrix by a sparse matrix by using the shrinkage function.

\section{Approximation by low rank matrices}
The sparse approximation as given by Theorem~\ref{theorem 3} has many applications such as data compression and dimension reduction~\cite{siam,fazel}.~In these areas, one may also be interested in finding matrices of low rank~(see, e.g.,~\cite{lin-chen-ma,tao-yuan,yuan-yang,dutta_thesis}).~For example,
given a matrix ${\mathbf A}\in {\mathbb R}^{m\times n}$, we want
to solve the following approximation problem:
\begin{equation*}%\label{LR}
\min_{{\mathbf X \in \mathbb{R}^{m\times n}}}\left[{\rm rank}({\mathbf X})+\frac{\beta}{2}\|{\mathbf X}-{\mathbf A}\|_F^2\right],
\end{equation*}
where $\|\cdot\|_F$ denotes the Frobenius norm of matrices (which
turns out to be equivalent to the vector $l_2$ norm if we treat a matrix as a
vector - see more discussion on the matrix norms in subsection 4.1).

This is a harder problem since rank$(X)$ is not a convex function.~A convex relaxation (see, e.g., ~\cite{fazel}) of
the problem is provided by replacing the term ${\rm rank}({\mathbf
X})$ by the nuclear norm of ${\mathbf X}$, $\|{\mathbf X}\|_*$,~(again, see subsection 4.1 for a discussion on the nuclear norm and its properties).~The problem then becomes:
\begin{equation}\label{NN}
\min_{{\mathbf X}\in \mathbb{R}^{m\times n}}\left[\|{\mathbf X}\|_*+\frac{\beta}{2}\|{\mathbf X}-{\mathbf A}\|_F^2\right].
\end{equation}
This problem again yields an explicit solution
(\cite{svt:cai-candes-shen,svt2}), but in these literatures, the formula is derived by using advanced tools from convex analysis
(``subdifferentials'' to be more specific)~\cite{watson}. Here,
we will show how we can obtain the solution
by using simple ideas from the previous section.
\subsection{Singular value decomposition and matrix norms} It will be beneficial to
recall the various matrix norms. Many useful matrix norms can be defined in terms of the singular
values of the matrices. We will deal with two of them: the nuclear norm $\|\cdot \|_{*}$ and the Frobenius norm $\|\cdot \|_F$.

Let ${\mathbf A}\in{\mathbb R}^{m\times n}$ and ${\mathbf A}={\mathbf U}\tilde{\mathbf A}{\mathbf V}^{T}$ be a singular value decomposition~(SVD)
of ${\mathbf A}$ with ${\mathbf U}\in {\mathbb R}^{m\times m}$ and ${\mathbf V}\in {\mathbb R}^{n\times n}$ being two
orthogonal matrices (that is, ${\mathbf U}^{-1}={\mathbf U}^{T}$ and ${\mathbf V}^{-1}={\mathbf V}^{T}$)
and $\tilde{\mathbf A}={\rm diag}(\sigma_1({\mathbf A}),\;\sigma_2({\mathbf A}),\ldots, \sigma_{\min \{m,n\}}({\mathbf A}))$ being a $m\times n$ non-square diagonal matrix
having $\sigma_1({\mathbf A})\geq \sigma_2({\mathbf A})\geq \cdots \geq \sigma_{\min \{m,n\}}({\mathbf A})\geq 0$ on its diagonal and 0's elsewhere. The $\sigma_i({\mathbf A})$'s are
called the singular values of ${\mathbf A}$.
It is known that every matrix in ${\mathbb R}^{m\times n}$ has a SVD and that SVD of a matrix is not necessarily unique~\cite{linearAlgebra}. 
%Indeed, since the singular values are indexed according to the descending order,
%$\sigma_i({\mathbf A})$ is unique and it is the $i$-th l\argest singular value of ${\mathbf A}$.
Then the nuclear norm of ${\mathbf A}$ is defined as
$$
\|{\mathbf A}\|_*=\sum_{i=1}^{\min \{m,n\}}\sigma_i({\mathbf A}),$$
and the Frobenius norm  of ${\mathbf A}$ as
$$
\|{\mathbf A}\|_F=\left(\sum_{i=1}^{\min \{m,n\}}(\sigma_i({\mathbf A}))^2\right)^{1/2}.$$
This norm turns out to be the same as the $\ell_2$ norm of ${\mathbf A}$, treated as a vector in $\mathbb{R}^{mn\times 1}$.
%by the following \argument:
This is because the nonzero singular values $\sigma_i({\mathbf A})$'s are exactly the square root of the nonzero eigenvalues of ${\mathbf A}{\mathbf A}^T$ or ${\mathbf A}^T{\mathbf A}$. 
So, $$
\|{\mathbf A}\|_{l_2}^2=\sum_{i=1}^m\sum_{j=1}^n(a_{ij})^2={\rm trace}({\mathbf A}{\mathbf A}^{T})
%={\rm trace}(({\mathbf U}\tilde{\mathbf A}{\mathbf V}^{T})({\mathbf V}\tilde{\mathbf A}^{T}{\mathbf U}))
%$$
%$$
%={\rm trace}({\mathbf U}\tilde{\mathbf A}\tilde{\mathbf A}^{T}{\mathbf U}^{T})={\rm trace}(\tilde{\mathbf A}\tilde{\mathbf A}^{T})
=\sum_{i=1}^{\min \{m,n\}}(\sigma_i({\mathbf A}))^2.$$
Here we have used ${\rm trace}(\cdot)$ to denote the trace of a matrix (which is equal to the sum of all
diagonal entries of the matrix).
%and we have also used the fact that ${\rm trace}({\mathbf A}{\mathbf B})
%={\rm trace}({\mathbf B}{\mathbf A})$ for any square matrices ${\mathbf A}$ and ${\mathbf B}$.

We will need the following simple and well-known fact about the nuclear norms of a matrix and that of its diagonal~\cite{stewart-sun}: Let $D({\mathbf A})$ denote the diagonal matrix using the diagonal of ${\mathbf A}$ then:
\begin{equation}\label{diag}
\|D({\mathbf A})\|_*\leq \|{\mathbf A}\|_*.
\end{equation}
This inequality can be verified by using a SVD of ${\mathbf A}={\mathbf U}\tilde{\mathbf A}{\mathbf V}^{T}$ as follows.~For completeness we will provide a simple proof of~(\ref{diag}).~Write ${\mathbf U}=(u_{ij})$, ${\mathbf V}=(v_{ij})$, and $t=\min\{m,n\}$. Then
$$
\|D({\mathbf A})\|_*=\|D({\mathbf U}\tilde{{\mathbf A}}{\mathbf V}^{T})\|_*
=\sum_{i=1}^{m}\left|\sum_{j=1}^{t}\sigma_j({\mathbf A})u_{ij}v_{ij}\right|
\leq \sum_{j=1}^{t}\sigma_j({\mathbf A})\sum_{i=1}^m|u_{ij}v_{ij}|
~~~$$
$$
\leq \sum_{j=1}^{t}\sigma_j({\mathbf A})\cdot \left(\sum_{i=1}^{m}|u_{ij}|^2\right)^{1/2}\left(\sum_{i=1}^{m}|v_{ij}|^2\right)^{1/2}
\leq
\sum_{j=1}^{t}\sigma_j({\mathbf A})=\|{\mathbf A}\|_*,
$$
where we have used the Cauchy-Schwarz inequality in obtaining the second inequality, and the orthogonality of ${\mathbf U}$ and ${\mathbf V}$ (so that $\sum_{i=1}^m|u_{ij}|^2\leq 1$ and $\sum_{i=1}^m|v_{ij}|^2\leq 1$) in the last inequality.

We will also use the fact that for any orthogonal matrices ${\mathbf L}$ and ${\mathbf R}$, $\mathbf{LAR}$ and ${\mathbf A}$ have the same singular values, and therefore their Frobenius norms and nuclear norms are same:
$$\|\mathbf{LAR}\|_F=\|\mathbf{A}\|_F\;\text{and}\;\|\mathbf{LAR}\|_*=\|\mathbf{A}\|_*.$$
This is known as the unitary invariance of the Frobenius norm and nuclear norm. 

\subsection{Solution to (\ref{NN}) via problem (\ref{P})} We are ready to show how problem (\ref{NN}) is problem (\ref{P}) in disguise.~Let ${\mathbf U}\tilde{\mathbf A}{\mathbf V}^{T}$ be a SVD of ${\mathbf A}$.~Given $\beta> 0$, using the unitary invariance of the Frobenius norm and the nuclear norm, we have
\begin{eqnarray*}
\min_{{\mathbf X}}[ \|{\mathbf X}\|_*+\frac{\beta}{2}\|{\mathbf X}-{\mathbf A}\|_F^2]&=&\min_{{\mathbf X}}[\|{\mathbf X}\|_*+\frac{\beta}{2}\|{\mathbf X}-{\mathbf U}\tilde{\mathbf A}{\mathbf V}^{T} \|_F^2]\\
%&=&\min_{{\mathbf X}}[\lambda \|{\mathbf X}\|_*+\frac{1}{2}\|{\mathbf U}({\mathbf U}^{T}{\mathbf X}{\mathbf V}-\tilde{\mathbf A})
%{\mathbf V}^{T} \|_F^2]\\
%&=&\min_{{\mathbf X}}[\lambda \|{\mathbf X}\|_*+\frac{1}{2}\sum_{i=1}^{\min\{m,n\}}\sigma_i({\mathbf U}({\mathbf U}^{T}{\mathbf X}
%{\mathbf V}-\tilde{\mathbf A}){\mathbf V}^{T})^2]\\
%&=&\min_{{\mathbf X}}[\lambda \|{\mathbf X}\|_*+\frac{1}{2}\sum_{i=1}^{\min\{m,n\}}\sigma_i({\mathbf U}^{T}{\mathbf X}{\mathbf V}-\tilde{\mathbf A})^2]\\
&=&\min_{{\mathbf X}}[\|{\mathbf U}^{T}{\mathbf X}{\mathbf V}\|_*+\frac{\beta}{2}\|{\mathbf U}^{T}{\mathbf X}{\mathbf V}-\tilde{\mathbf A}\|_F^2].
\end{eqnarray*}
It can be seen from the last expression that the minimum occurs when
$\tilde{\mathbf X}:={\mathbf U}^{T}{\mathbf X}{\mathbf V}$ is diagonal since both terms in that expression get no larger when
$\tilde{\mathbf X}$ is replaced by its diagonal matrix (with the help of (\ref{diag})). So, the matrix
$
{\mathbf E}=({e}_{ij}):=\tilde{\mathbf X}-\tilde{\mathbf A}
$
is a diagonal matrix. Thus, $${\mathbf X}={\mathbf U}\tilde{\mathbf X}{\mathbf V}^{T},
 ~{\rm with}~\tilde{\mathbf X}=\tilde{\mathbf A}+{\mathbf E},$$
which yields a SVD of ${\mathbf X}$ (using the same matrices ${\mathbf U}$ and ${\mathbf V}$ as in a SVD of ${\mathbf A}$~!). Then,
\begin{eqnarray}\label{svt_1}
\min_{{\mathbf X}}[\|{\mathbf X}\|_*+\frac{\beta}{2}\|{\mathbf X}-{\mathbf A}\|_F^2]&=&
\min_{\tilde{\mathbf X}\in {\rm diag}}[\|\tilde{\mathbf X}\|_*+\frac{\beta}{2}\|\tilde{\mathbf X}-\tilde{\mathbf A}\|_F^2]\nonumber\\
&=&\min_{\tilde{\mathbf X}\in {\rm diag}}[ \sum_{i}|\tilde{x}_{ii}|+\frac{\beta}{2}\sum_{i}(\tilde{x}_{ii}-\sigma_i({\mathbf A}))^2],
\end{eqnarray}
where ${\rm ``diag"}$ is the set of diagonal matrices in $\mathbb{R}^{m\times n}$.~Note that (\ref{svt_1}) is an optimization problem like (\ref{P}) (for vectors $(\tilde{x}_{11},\tilde{x}_{22},\cdots)^T$
as $\tilde{\mathbf X}$ varies) whose solution is given by
%\footnote{A careful reader will notice the additional requirement on $\sigma_i(\tilde{\mathbf X})$: they are non-negative and sorted in descending order. Fortunately, this property can be automatically inherited from that of $\sigma_i(\tilde{\mathbf A})$ and the monotone property of the shrinkage function.}
$$
\tilde{x}_{ii}=S_{{1}/{\beta}}(\sigma_i({\mathbf A})),~i=1,2,\cdots.$$
To summarize, we have proven the following.
\vspace{0.05in}
\begin{theorem} \label{theorem 4}\cite{svt:cai-candes-shen} {Suppose that ${\mathbf A} \in \mathbb{R}^{m\times n}$ and $\beta > 0$ are given.
Then the solution to the minimization problem (\ref{NN})
%$$\min_{\mathbf{X} \in \mathbb{R}^{m\times n}}\left[\|\mathbf{X}\|_*+\frac{\beta}{2}\|\mathbf{X - A} \|_F^2\right]$$
is given by $\hat{\mathbf X}={\mathbf U}\tilde{\mathbf X}{\mathbf V}^{T}$ where the diagonal matrix $\tilde{\mathbf{X}}$ has diagonal entries
$$\tilde{x}_{ii} = S_{{1}/{\beta}}(\sigma_i({\mathbf{A}})), ~i=1,2,\dots,\min\{m,n\}, $$
where ${\mathbf U}\tilde{\mathbf A}{\mathbf V}^{T}$ is a SVD of ${\mathbf A}$.
}
\end{theorem}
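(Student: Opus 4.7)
The plan is to reduce the matrix problem (\ref{NN}) to the scalar problem (\ref{P}) by exploiting the unitary invariance of both the Frobenius and nuclear norms. First I would fix an SVD $\mathbf{A} = \mathbf{U}\tilde{\mathbf{A}}\mathbf{V}^T$ and introduce the change of variable $\tilde{\mathbf{X}} := \mathbf{U}^T\mathbf{X}\mathbf{V}$. Since $\mathbf{U}$ and $\mathbf{V}$ are orthogonal, unitary invariance gives $\|\mathbf{X}\|_* = \|\tilde{\mathbf{X}}\|_*$ and $\|\mathbf{X} - \mathbf{A}\|_F = \|\tilde{\mathbf{X}} - \tilde{\mathbf{A}}\|_F$. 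Since $\mathbf{X} \mapsto \tilde{\mathbf{X}}$ is a bijection on $\mathbb{R}^{m\times n}$, minimizing over $\mathbf{X}$ is equivalent to minimizing $\|\tilde{\mathbf{X}}\|_* + \frac{\beta}{2}\|\tilde{\mathbf{X}} - \tilde{\mathbf{A}}\|_F^2$ over all $\tilde{\mathbf{X}} \in \mathbb{R}^{m\times n}$.

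Next I would argue that an optimal $\tilde{\mathbf{X}}$ can be taken to be diagonal. The inequality (\ref{diag}) gives $\|D(\tilde{\mathbf{X}})\|_* \leq \|\tilde{\mathbf{X}}\|_*$. For the Frobenius term, since $\tilde{\mathbf{A}}$ is diagonal, the off-diagonal entries of $\tilde{\mathbf{X}} - \tilde{\mathbf{A}}$ are exactly those of $\tilde{\mathbf{X}}$, so zeroing them out can only decrease the sum of squares: $\|D(\tilde{\mathbf{X}}) - \tilde{\mathbf{A}}\|_F \leq \|\tilde{\mathbf{X}} - \tilde{\mathbf{A}}\|_F$. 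Replacing $\tilde{\mathbf{X}}$ by $D(\tilde{\mathbf{X}})$ therefore cannot increase the objective, so the minimum is attained at some diagonal matrix.

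Once reduced to diagonal $\tilde{\mathbf{X}}$, the objective separates as $\sum_i\bigl[|\tilde{x}_{ii}| + \frac{\beta}{2}(\tilde{x}_{ii} - \sigma_i(\mathbf{A}))^2\bigr]$. Each summand depends on only one scalar $\tilde{x}_{ii}$, so after dividing by $\beta$ the $i$th subproblem is precisely (\ref{P}) with $a = \sigma_i(\mathbf{A})$ and $\lambda = 1/\beta$. Theorem~\ref{theorem 1} then gives $\tilde{x}_{ii} = S_{1/\beta}(\sigma_i(\mathbf{A}))$, and undoing the change of variable yields $\hat{\mathbf{X}} = \mathbf{U}\tilde{\mathbf{X}}\mathbf{V}^T$.

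The main obstacle is the diagonalization step: one must show that the nuclear norm and Frobenius norm \emph{simultaneously} do not increase under $D(\cdot)$. The Frobenius part is elementary, but the nuclear part genuinely relies on (\ref{diag}), whose proof rests on the SVD together with Cauchy–Schwarz. A secondary point worth verifying is that the resulting diagonal of $\hat{\mathbf{X}}$ is nonnegative and non-increasing (since $S_{1/\beta}$ is nonnegative on $[0,\infty)$ and nondecreasing), so that $\mathbf{U}\tilde{\mathbf{X}}\mathbf{V}^T$ really is an SVD of $\hat{\mathbf{X}}$ and not merely an orthogonal factorization.
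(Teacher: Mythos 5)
Your proposal is correct and follows essentially the same route as the paper: SVD change of variables via unitary invariance, reduction to diagonal matrices using (\ref{diag}) together with the elementary Frobenius observation, and then component-wise application of Theorem~\ref{theorem 1}. Your closing remark about checking that the resulting diagonal is nonnegative and non-increasing (so that $\mathbf{U}\tilde{\mathbf{X}}\mathbf{V}^T$ is genuinely an SVD of $\hat{\mathbf{X}}$) is a small point of care that the paper glosses over.
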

\vspace{0.05in}

\noindent {\bf Remark.}
{\bf 1.} A recent proof of this
theorem is given by Cai, Candes, and Shen in
\cite{svt:cai-candes-shen} where they give an advanced
verification of the result.  Our proof given above has the advantage that it is elementary and allows the reader to ``discover'' the result.\\
{\bf 2.} There are many earlier discoveries of related results
(\cite{linearAlgebra, strang}) where rank$({\mathbf X})$ is used instead of the
nuclear norm $\|{\mathbf X}\|_*$. We will examine one
such variant in the next section.\\
%{\bf 3.} One key ingredient in the above discussion is the unitary
%invariance of the norms $\|\cdot\|_*$ and $\|\cdot\|_F$. It was von Neumann (see, e.g., \cite{fazel})
%who was among the first to study the family of all unitarily invariant
%matrix norms in matrix approximation, $\|\cdot\|_F$ being one of them.\\
{\bf 3.} A closely related (but harder) problem is {\it compressive sensing} (\cite{candes-romberg-tao,candes-tao2010}).~In a more general set up where only a subset of the entries of the data matrix is observable, for example, matrix completion problem under low-rank penalties~\cite{svt:cai-candes-shen,svt2,candes-tao2010}:
\begin{equation*}
\min_{{\mathbf X}}{\rm rank}({\mathbf X})~~~~{\rm subjcet~to~} A_{ij}=X_{ij},~(i,j)\in\Omega,
\end{equation*}
where $\Omega\subseteq\{(i,j):1\le i\le m, 1\le j\le n\}$,~is indeed NP-hard~\cite{NP}.~Note that, the matrix completion problem is a special case of the affinely constrained matrix rank minimization problem~\cite{svt2}: \begin{equation*}
\min_{{\mathbf X}}{\rm rank}({\mathbf X})~~~~{\rm subjcet~to~} A({\mathbf X})={\mathbf b},
\end{equation*} where ${\mathbf X}\in\mathbb{R}^{m\times n}$ be the decision variable and~${\mathbf A}:\mathbb{R}^{m\times n}\to\mathbb{R}^p$ be a linear map. One common idea used in such a situation is to consider a convex relaxation of the above problem by replacing the rank by its convex surrogate, the nuclear norm.~Readers are strongly recommended to the recently survey by Bryan and Leise (\cite{siam}).
%%end of remark

\section{A variation} More problems can be solved by applying similar ideas. For example, let us consider a
variant of a well-known result of Schmidt~(see, e.g.,~\cite[Section 5]{strang}), replacing the rank by the nuclear
norm: For a fixed positive number $\tau$, consider
\begin{equation}\label{BA}
\min_{\mathbf X\in \mathbb{R}^{m\times n}} \|{\mathbf X}-{\mathbf A}\|_F
~~{\rm subject~to}~~\|{\mathbf X}\|_*\leq \tau.
\end{equation}
Using similar methods as in Section 3, this problem can be transformed~(see the derivation below) into the following:
\begin{equation}\label{BAS}
\min_{\mathbf u\in \mathbb{R}^{\min\{m,n\}}} \|{\mathbf u}-{\mathbf v}\|_{\ell_2}
~~{\rm subject~to}~~\|{\mathbf u}\|_{\ell_1}\leq \tau.
\end{equation}

The LASSO~(Least absolute shrinkage and selection operator) is a vastly used regression technique in data mining and statistics~\cite{soft-threshold2,Gen_lasso,LASSO_dual}.~It follows a simple model for variable selection and regularization.~Let the predictor variables, ${\mathbf X}\in\mathbb{R}^{N \times p}$ and the responses, $y_i, i = 1,2,\ldots, N$ are given.~Assuming that~$\sum_ix_{ij}/N=0$ and $\sum_ix_{ij}^2/N=1$, the LASSO estimate is given by:
\begin{equation}\label{lasso_problem}
{\mathbf {\hat{\beta}}}=\arg\min_{{\mathbf \beta}}\|{\mathbf y}-{\mathbf {\beta_0}}-{\mathbf {X\beta}}\|_{\ell_2}^2,~{\rm subject~to}~\|\mathbf{\beta}\|_{\ell_1}\le\tau,
\end{equation}
where ${\mathbf {\beta_0}}$ is the mean of the response vector ${\mathbf y}$.~Note that,~(\ref{BAS}) shares some similarity with~(\ref{lasso_problem})~\cite{soft-threshold2,Gen_lasso,LASSO_dual}.~As in~\cite{soft-threshold2}, one can form a {\it Lagrangian} of~(\ref{BAS}) and solve:
\begin{equation}\label{Lagrange BAS1}
{\mathbf u^*}=\arg\min_{\mathbf u\in \mathbb{R}^{\min\{m,n\}}} \{\frac{1}{2}\|{\mathbf u}-{\mathbf v}\|_{\ell_2}^2+\lambda\|{\mathbf u}\|_{\ell_1}\},\;\;{\rm with}\;\|S_{\lambda}({\mathbf v})\|_{\ell_1}=\tau,
\end{equation}
which has a solution ${\mathbf u^*}=S_{\lambda}({\mathbf v})$ according to Theorem~\ref{theorem 2}.~(The reason for us to use $\lambda$ instead of $\beta$ in~(\ref{BAS}) is nonessential: it is only for indicating the similarity with LASSO formulation.)
We now sketch the derivation of converting~(\ref{BA}) to~(\ref{BAS}):
As before, let ${\mathbf A} = {\mathbf U}\tilde{\mathbf A}{\mathbf V}^{T}$ be a SVD of ${\mathbf A}$. Then,
$$\min_{\mathbf X\in \mathbb{R}^{m\times n}}\|\mathbf{X - A}\|_F = \min_{\mathbf{X}\in \mathbb{R}^{m\times n}}\|\mathbf{U^{T}XV} - \tilde{\mathbf A} \|_F.$$ Note that $\|{\mathbf X}\|_*=\|{\mathbf U}^{T}{\mathbf X}{\mathbf V}\|_*$, so (\ref{BA}) can be written as
\begin{equation*}%\label{BA}
\min_{\mathbf X\in \mathbb{R}^{m\times n}} \|{\mathbf X}-\tilde{\mathbf A}\|_F
~~{\rm subject~to}~~\|{\mathbf X}\|_*\leq \tau,
\end{equation*}
which, by using (\ref{diag}), can be further transformed to
\begin{equation}\label{BAd}
\min_{\mathbf X\in \mathbb{R}^{m\times n}} \|{\mathbf X}- \tilde{\mathbf A}\|_F
~~{\rm subject~to}~~{\mathbf X} ~{\rm being}~{\rm diagonal}~ {\rm and}~\|{\mathbf X}\|_*\leq \tau.
\end{equation}
Next, if we let $\mathbf{u}$ and $\mathbf{v}$ be two vectors in $\mathbb{R}^{\min\{m, n\}}$ consisting of the diagonal elements of $\mathbf{{X}}$ and ${\tilde{\mathbf A}}$, respectively, then
(\ref{BAd}) is equivalent to~(\ref{BAS}). 
%It is easy to see that 
%\begin{align*}
%\min_{\mathbf u\in \mathbb{R}^{\min\{m,n\}}} \{\frac{1}{2}\|{\mathbf u}-{\mathbf v}\|_{\ell_2}^2+\lambda_1\|{\mathbf u}\|_{\ell_1}\}\le\frac{1}{2}\|{\mathbf u}-{\mathbf v}\|_{\ell_2}^2+\lambda_1\|{\mathbf u}\|_{\ell_1},
%\end{align*}
%for all ${\mathbf u\in \mathbb{R}^{\min\{m,n\}}}.$ 
Since $S_{\lambda_1}({\mathbf v})$ solves (\ref{Lagrange BAS1}) we have, 
\begin{align*}
\frac{1}{2}\|S_{\lambda_1}({\mathbf v})-{\mathbf v}\|_2^2+\lambda_1\tau \le \frac{1}{2}\|{\mathbf u}-{\mathbf v}\|_{\ell_2}^2+\lambda_1\|{\mathbf u}\|_{\ell_1},
\end{align*}
for all ${\mathbf u\in \mathbb{R}^{\min\{m,n\}}}$. Which implies, for all ${\mathbf u\in \mathbb{R}^{\min\{m,n\}}}$, 
\begin{align*}
\frac{1}{2}\|S_{\lambda_1}({\mathbf v})-{\mathbf v}\|_2^2 \le \frac{1}{2}\|{\mathbf u}-{\mathbf v}\|_{\ell_2}^2+\lambda_1(\|{\mathbf u}\|_{\ell_1}-\tau).
\end{align*}
Therefore, 
\begin{align*}
\frac{1}{2}\|S_{\lambda_1}({\mathbf v})-{\mathbf v}\|_2^2 \le \{\frac{1}{2}\|{\mathbf u}-{\mathbf v}\|_{\ell_2}^2\},
\end{align*}
for all ${\mathbf u\in \mathbb{R}^{\min\{m,n\}}}$, such that $\|{\mathbf u}\|_{\ell_1}\le\tau$. Hence ${\mathbf u^*}=S_{\lambda}({\mathbf v})$ solves~(\ref{BAS}).
Thus we have established the following result.
\vspace{0.05in}
\begin{theorem}\label{theorem 5}\cite{dutta_thesis} With the notations above,  the solution to problem (\ref{BA}) is
	given by
	$$
	\hat{\mathbf X} ={\mathbf  U} S_{\lambda}(\tilde{\mathbf A}) {\mathbf V}^{T},
	$$
	for some $\lambda$ such that $\|S_{\lambda}(\tilde{\mathbf A})\|_{\ell_1}=\tau.$
\end{theorem}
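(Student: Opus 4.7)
The plan is to assemble the two reductions already sketched in the preceding discussion into a clean three-step argument: (i) reduce the matrix problem (\ref{BA}) to the vector problem (\ref{BAS}); (ii) show that (\ref{BAS}) admits a Lagrangian formulation (\ref{Lagrange BAS1}) whose explicit solution is given by Theorem~\ref{theorem 2}; and (iii) establish existence of a Lagrange parameter $\lambda$ for which the unconstrained minimizer is feasible and hence optimal for (\ref{BAS}).

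First, I would fix a SVD $\mathbf{A} = \mathbf{U}\tilde{\mathbf A}\mathbf{V}^{T}$ and substitute $\mathbf Y := \mathbf U^{T}\mathbf X \mathbf V$. Unitary invariance of $\|\cdot\|_F$ and $\|\cdot\|_*$ turns (\ref{BA}) into minimizing $\|\mathbf Y - \tilde{\mathbf A}\|_F$ subject to $\|\mathbf Y\|_* \le \tau$. Since $\tilde{\mathbf A}$ is diagonal, replacing $\mathbf Y$ by its diagonal $D(\mathbf Y)$ can only decrease the Frobenius objective (off-diagonal entries contribute nonnegatively to $\|\mathbf Y - \tilde{\mathbf A}\|_F^2$) and, by inequality (\ref{diag}), can only decrease $\|\mathbf Y\|_*$, so feasibility is preserved. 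Hence the minimum is attained at a diagonal $\mathbf Y$, and setting $\mathbf u$, $\mathbf v$ equal to the diagonals of $\mathbf Y$ and $\tilde{\mathbf A}$ reduces (\ref{BA}) exactly to (\ref{BAS}).

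Second, I would solve (\ref{BAS}) through its Lagrangian relaxation (\ref{Lagrange BAS1}). By Theorem~\ref{theorem 2}, the unconstrained problem has the explicit minimizer $S_{\lambda}(\mathbf v)$. The remaining task is to choose $\lambda \ge 0$ so that $\|S_{\lambda}(\mathbf v)\|_{\ell_1} = \tau$. The map $\lambda \mapsto \|S_{\lambda}(\mathbf v)\|_{\ell_1}$ is continuous and monotone non-increasing on $[0,\infty)$, equals $\|\mathbf v\|_{\ell_1}$ at $\lambda = 0$, and vanishes for $\lambda \ge \max_i |v_i|$. Hence for every $\tau \in [0, \|\mathbf v\|_{\ell_1}]$ the intermediate value theorem produces the required $\lambda$; if $\tau > \|\mathbf v\|_{\ell_1}$ the constraint is inactive and $\lambda = 0$ works. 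Finally, as spelled out in the display preceding the theorem, once $\lambda$ is chosen so that $\|S_{\lambda}(\mathbf v)\|_{\ell_1} = \tau$, the Lagrangian inequality gives, for every feasible $\mathbf u$,
$$\tfrac12\|S_{\lambda}(\mathbf v) - \mathbf v\|_{\ell_2}^2 \le \tfrac12\|\mathbf u - \mathbf v\|_{\ell_2}^2 + \lambda\bigl(\|\mathbf u\|_{\ell_1} - \tau\bigr) \le \tfrac12\|\mathbf u - \mathbf v\|_{\ell_2}^2,$$
so $\mathbf u^{*} = S_{\lambda}(\mathbf v)$ solves (\ref{BAS}). Lifting back via $\hat{\mathbf X} = \mathbf U \operatorname{diag}(\mathbf u^{*})\mathbf V^{T} = \mathbf U S_{\lambda}(\tilde{\mathbf A})\mathbf V^{T}$ yields the stated formula.

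The main obstacle is the existence step in the Lagrangian argument: one must verify that a single scalar $\lambda$ can be tuned to enforce the constraint exactly, and that at this $\lambda$ the unconstrained minimizer is actually optimal for the constrained problem. Continuity and monotonicity of $\lambda \mapsto \|S_{\lambda}(\mathbf v)\|_{\ell_1}$ deliver the former, while the weak-duality sandwich above (which uses only feasibility and non-negativity of $\lambda$) delivers the latter. Everything else is a careful repackaging of the SVD-based reduction already carried out in the preceding paragraphs.
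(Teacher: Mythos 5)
Your proposal is correct and follows essentially the same route as the paper: the SVD/unitary-invariance reduction to the diagonal, inequality (\ref{diag}) to justify restricting to diagonal matrices, Theorem~\ref{theorem 2} applied to the Lagrangian (\ref{Lagrange BAS1}), and the same weak-duality sandwich to pass from the Lagrangian minimizer back to the constrained problem (\ref{BAS}). Your additional step --- using continuity and monotonicity of $\lambda \mapsto \|S_{\lambda}(\mathbf v)\|_{\ell_1}$ and the intermediate value theorem to guarantee that a $\lambda$ with $\|S_{\lambda}(\mathbf v)\|_{\ell_1}=\tau$ actually exists when $\tau \le \|\mathbf v\|_{\ell_1}$ --- fills in a detail the paper leaves implicit, and is a worthwhile addition.
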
\\
\\
%\begin{acknowledgment}
\noindent
{\bf Acknowledgment.}
This work is partially supported
by a CSUMS program of the National Science Foundation DMS-0803059. Toby Boas, Katie Mercier, and Eric Niederman contributed to this work as undergraduate students.

\end{document}